\documentclass[sn-mathphys,Numbered]{sn-jnl}

\usepackage{graphicx}%
\usepackage{multirow}%
\usepackage{amsthm}%
\usepackage{mathrsfs}%
\usepackage[title]{appendix}%
\usepackage{xcolor}%
\usepackage{textcomp}%
\usepackage{manyfoot}%
\usepackage{booktabs}%
\usepackage{algorithm}%
\usepackage{algorithmicx}%
\usepackage{algpseudocode}%
\usepackage{listings}%
\usepackage{amsmath,amsfonts,amssymb}

\theoremstyle{thmstyleone}
\newtheorem{thm}{Theorem}[section]

\newtheorem{lemma}[thm]{Lemma}

\theoremstyle{remark}
\newtheorem{rem}{Remark}[section]
\numberwithin{equation}{section}
\begin{document}
\title[Certain functional identities on matrix rings]{Certain functional identities on matrix rings}

\author[1]{\fnm{Kaijia} \sur{Luo}}\email{kaijia\_luo@163.com}
\author[2]{\fnm{Jiankui} \sur{Li}}\email{jkli@ecust.edu.cn}

\affil[1]{\orgdiv{Institute of Mathematics}, \orgname{Hangzhou Dianzi University}, \orgaddress{\city{Hangzhou}, \postcode{310018}, \country{China}}}

\affil[2]{\orgdiv{School of Mathematics}, \orgname{East China University of Science and Technology}, \orgaddress{ \city{Shanghai}, \postcode{200237}, \country{China}}}

\abstract{
Let $D$ be a noncommutative division ring and let $R = M_{m}(D)$ with $m > 1$.
We characterize additive  mappings $f,$ $g:R\rightarrow R$ satisfying the identity $f(X) = X^{n}g(X^{-1})$ for every invertible element $X$ in $R$, where $n$ is a nonnegative integer.
We show that the solutions are precisely given by  $f = g$ and  $f(X) = Xf(I)$ for all $X$ in  $R$. 
Moreover, if $n \neq2$,  both $f$ and $g$ are identically zero.
}

\keywords{functional identity, noncommutative division ring, additive mapping, matrix ring}
\pacs[MSC Classification]{16R60, 16K40}


\maketitle

\section{Introduction}

Let $R$ be an  associative ring with unity $I$. Denote by $R^{\times}$ the set of all invertible elements of $R$.
The problem concerning the continuity of an additive mapping $f$ on the field of real numbers satisfying $f(x)=x^2f(x^{-1})$ was posed by Halperin in 1963. 
It was later included by Aczél \cite{A} in his compilation of open questions in the theory of functional equations.
Two independent affirmative answers to Halperin's question were given by  Kurepa \cite{K} and  Jurkat \cite{J}.
Inspired by the work of Kurepa, Vukman~\cite{V1987} proved that every additive mapping  $f$ on a noncommutative division ring  $D$ of characteristic not 2 satisfying the identity $f(x)+x^2f(x^{-1})=0$ for all nonzero $x\in D$ must be identically zero.  
Dar and Jing~\cite{DJ} characterized the functional identity $f(x) = -x^{2}g(x^{-1})$ on a noncommutative division ring of characteristic not 2.
Subsequently,  Catalano and Merchán~\cite{CM} proved that on division rings of characteristic different from $2$ and $3$,  additive mappings $f$ and $g$ satisfying  $f(x) = -x^{n}g(x^{-1})$ for $n=3$ or $n=4$ are zero mappings.
Lee and  Lin \cite{LL} established the structure of additive mappings $f$ and $g$ satisfying $f(x) = x^{n}g(x^{-1})$ for all nonzero $x$ in a noncommutative division ring $D$, where $n\neq2$ is a positive integer and the characteristic of $D$ is not 2. 
Recently, Eroǧlu, Lee and Lin \cite{ELL} generalized the above results  to an arbitrary noncommutative division ring   as follows.

\begin{thm}\cite[Theorem 1.5]{ELL}
Let $D$ be a noncommutative division ring, and let $n$ be a nonnegative integer. Suppose that $f,$ $g : D \rightarrow D$ are additive mappings satisfying the identity $f(x) = x^{n}g(x^{-1})$ for all $x\in D^{\times}$. The following hold:\\
(i) If $n = 2$, then $f = g$ and $f(x) = xf(1)$ for all $x \in  D$.\\
(ii) If $n\neq 2$, then $f =g = 0$.
\end{thm}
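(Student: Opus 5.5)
Since this is the division-ring theorem of Eroǧlu, Lee and Lin quoted from \cite{ELL}, I would reprove it along the classical Hua-type linearization route. The plan has four steps.

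\textbf{Step 1: the case $x=1$ and the inverse relation.} Setting $x=1$ gives $f(1)=g(1)$. Replacing $x$ by $x^{-1}$ gives the companion identity $g(x)=x^{n}f(x^{-1})$ for all $x\in D^{\times}$. Thus $f$ and $g$ play symmetric roles.

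\textbf{Step 2: linearize with Hua's identity.} For $x\neq 0,1$ we have
\[
x^{-1}-(x-1)^{-1}=-x^{-1}(x-1)^{-1},\qquad \bigl(x^{-1}-(x-1)^{-1}\bigr)^{-1}=x-x^{2}.
\]
Apply the identity to $x$, to $x-1$ and to $x-x^{2}$, and use additivity of $g$. This should yield a polynomial-type functional identity in $x$ involving $f(x)$, $f(x^{2})$, $f(1)$ and powers of $x$. First, additivity gives
\[
f(x)-f(x-1)=f(1).
\]
Writing $f(x)=x^{n}g(x^{-1})$ and $f(x-1)=(x-1)^{n}g((x-1)^{-1})$, this becomes an equation in $g(x^{-1})$ and $g((x-1)^{-1})$. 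Combining it with $g$ evaluated at $x^{-1}-(x-1)^{-1}$, through $f(x-x^{2})=(x-x^{2})^{n}g\bigl((x-x^{2})^{-1}\bigr)$, gives the desired relation. Replacing $x$ by rational multiples $kx$, or using the central prime subfield, lets me separate the powers of $x$ when the characteristic allows it.

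\textbf{Step 3: pin down the form of $f$ and force the exponent.} From the relation in Step 2, I expect to derive $f(x^{2})=x f(x)+\dots$, and more generally that $f$ is a generalized derivation-type map $f(x)=x a+\delta(x)$ with $\delta$ a derivation, possibly plus an inner-type term. Substituting back, the homogeneity in $x$ forces $n=2$ unless everything vanishes. For $n=2$ the derivation part must satisfy $\delta(x)=-x^{2}\delta(x^{-1})$, which holds automatically, so the real restriction comes from $f(x)=x^{2}g(x^{-1})$ together with the symmetric identity. This yields $\delta=0$, $f(x)=xf(1)$ and $g=f$. For $n\neq 2$, comparing degrees kills $f(1)$ and $\delta$, so $f=g=0$.

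\textbf{Main obstacle.} The hard part is handling small characteristics (2 and 3) and small central subfields, where the degree-comparison trick in Step 3 fails. Here I would use noncommutativity essentially. I would replace $x$ by $x+c$ for central $c$ and by conjugates $yxy^{-1}$. Then I would invoke the theory of functional identities, or a direct argument, showing that a relation of the form $\sum x^{i}a_{i}x^{j}=0$ holding for all $x$ in a noncommutative division ring forces trivial coefficients unless $D$ is finite-dimensional of small degree. The small-degree case would be treated separately via a splitting field and a matrix computation.
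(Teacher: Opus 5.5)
The paper gives no proof of this statement. It quotes it from \cite{ELL} and uses it as a black box, entrywise through the projections $\pi_{kl}$. So your outline has to stand on its own, and it has a genuine gap exactly where noncommutativity must enter.

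Carrying out your Step 2 gives, for $x\neq 0,1$,
\[
f(x^{2})=\bigl[1+(-1)^{n}\bigl(x^{n}-(x-1)^{n}\bigr)\bigr]f(x)-(-1)^{n}x^{n}f(1).
\]
For $n=2$ this says that $h(x)=f(x)-xf(1)$ satisfies $h(x^{2})=2xh(x)$. That is a Jordan \emph{left} derivation condition, not ``$f(x)=xa+\delta(x)$ with $\delta$ a derivation''. Your claim that a derivation automatically satisfies $\delta(x)=-x^{2}\delta(x^{-1})$ is false in a noncommutative ring, since $-x^{2}\delta(x^{-1})=x\delta(x)x^{-1}$.

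More importantly, over a field with a nonzero derivation $\delta$, the pair $f=\delta$, $g=-\delta$ satisfies the hypothesis with $n=2$. So no homogeneity or degree comparison can produce $f=g$ and $f(x)=xf(1)$. You need a genuinely noncommutative ingredient, for instance that a Jordan left derivation on a noncommutative prime ring of characteristic $\neq 2$ vanishes, which is itself a nontrivial theorem. In characteristic $2$ the relation collapses to $h(x^{2})=0$ and says almost nothing. Your sentence ``This yields $\delta=0$'' is precisely the missing argument.

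For $n\neq 2$ the same issue arises in every positive characteristic, not only $2$ and $3$. Since $f$ is merely additive, you only have $f(\lambda x)=\lambda f(x)$ for $\lambda$ in the prime field. Separating the powers of $\lambda$ after $x\mapsto \lambda x$ in the relation above needs roughly $n$ such scalars. This is not a technicality: on a field of characteristic $p$, $f(x)=g(x)=x^{p^{k}}$ solves the identity with $n=2p^{k}\neq 2$.

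Your fallback does not repair this. The relations you obtain are functional identities in the unknown maps $f,g$, not generalized polynomial identities $\sum x^{i}a_{i}x^{j}=0$ with fixed coefficients, so results about the latter do not apply. You also cannot pass to a splitting field $D\otimes_{Z}K$, because additive maps that are not $Z$-linear do not extend.

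As written, the proposal is a heuristic plan. Its decisive step --- using noncommutativity to exclude the derivation-type and Frobenius-type solutions that exist over fields --- is absent.
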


Given a field $\mathbb{F}$ of characteristic not 2, the identity $f(x) = x^{n}g(x^{-1})$ for all $x\in \mathbb{F}^{\times}$ was completely determined by Ng \cite{N}.
We remark that Theorem 1.1 is in general not true if $D$ is a field of characteristic $p> 0$, see \cite[Example 4.3 (iii) and (iv)]{LL}.

For $n=2$, the characterization of the identity in Theorem 1.1 on $m\times m$ matrix rings over division rings was investigated by Dar and Jing \cite[Theorem 1.3]{DJ}. 
They proved that if additive mappings $f$ and $g$ on the matrix ring $R = M_m(D)$ (where $m>1$ and $D$ is a noncommutative division ring with $\operatorname{char}(D) \neq 2,3$) satisfy the identity $f(X) + X^{2}g(X^{-1}) = 0$ for all $X \in R^{\times}$, then there exists an element $P \in R$ such that $f(X) = XP$ and $g(X) = -XP$ for all $X \in R$.

Continuing this line of investigation, the goal of this paper is to extend Theorem 1.1   to the case of $m\times m$ matrix ring $R = M_{m}(D)$ over an arbitrary noncommutative division ring $D$, by considering the identity $f(X) = X^{n}g(X^{-1})$ for all invertible $X\in R^{\times}$. The main result  can be summarized as follows.

\begin{thm}
Let $D$ be a noncommutative division ring and let $R = M_{m}(D)$ with $m> 1$.
Suppose that additive mappings $f,$ $g :R \rightarrow R$ satisfy $f(X) = X^{n}g(X^{-1})$ for all  $X\in R^{\times}$, where $n$ is a nonnegative integer. \\
(i) If $n = 2$, then $f = g$ and $f(X) = Xf(I)$ for each $X \in  R$.\\
(ii) If $n \neq2$, then $f =g = 0$.
\end{thm}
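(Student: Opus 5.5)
We plan to reduce the matrix identity to the division-ring case (Theorem 1.1) by restricting to suitable invertible matrices, and then use the additivity of $f,g$ together with the abundance of invertible elements in $R$ to propagate the information.

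\textbf{Step 1: the diagonal restriction.} For $x\in D^{\times}$ and a fixed index $i$, put $X_x=xE_{ii}+\sum_{j\neq i}E_{jj}$. Then $X_x^{-1}=X_{x^{-1}}$ and $X_x^n=X_{x^n}$. Define additive maps on $D$ by
\[
\varphi_{kl}(x)=e_k\,f(xE_{ii})\,e_l,\qquad \psi_{kl}(x)=e_k\,g(xE_{ii})\,e_l,
\]
meaning the $(k,l)$ entries. Since $f(X_x)=f(xE_{ii})+f(I-E_{ii})$, and similarly for $g$, the identity becomes
\[
f(xE_{ii})+f(I-E_{ii})=X_{x^n}\bigl(g(x^{-1}E_{ii})+g(I-E_{ii})\bigr).
\]
Reading off the entries in row $i$ gives equations of the form $\varphi_{il}(x)+c=x^n(\psi_{il}(x^{-1})+d)$. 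Rows $k\ne i$ give $\varphi_{kl}(x)+c=\psi_{kl}(x^{-1})+d$ for all $x\ne 0$.

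The constants are removed by a second substitution, $x\mapsto$ something like $x+1$, or more simply by treating $x\mapsto \tilde f(x)=f(xE_{ii})+xf(I-E_{ii})$-type combinations. Concretely, one defines the additive maps $F(x)=f(xE_{ii})_{il}+x\,c'$ so that the identity becomes exactly of the form in Theorem 1.1. The affine constants are additive images of $1$, so they can be absorbed by setting $\hat f(x)=\varphi(x)+x\cdot(\text{const})$ where possible. The cases $n=0,1$ need separate small checks. Theorem 1.1 then determines the row-$i$ entries. For rows $k\neq i$, the identity says that an additive map of $x$ equals an additive map of $x^{-1}$, which is the $n=0$ case of Theorem 1.1, so those entries vanish up to constants.

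\textbf{Step 2: off-diagonal parts.} Use the invertible matrices $I+xE_{ij}$ ($i\neq j$), whose inverse is $I-xE_{ij}$ and whose $n$-th power is $I+nxE_{ij}$. This gives
\[
f(I)+f(xE_{ij})=(I+nxE_{ij})\bigl(g(I)-g(xE_{ij})\bigr).
\]
Replacing $x$ by $-x$ and adding, and then using additivity (the left side is affine in $x$, the right side is quadratic), we obtain $f(I)=g(I)$ and $nxE_{ij}g(xE_{ij})=0$, together with a linear relation between $f(xE_{ij})$, $g(xE_{ij})$ and $nxE_{ij}g(I)$. Combined with Step 1 and additivity, this determines $f$ on all of $R$, since $R$ is additively spanned by the $xE_{ij}$. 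We also use matrices such as $xE_{ij}+yE_{ji}+\cdots$ (permutation-type invertibles) to pass information between different positions.

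\textbf{Step 3: the two cases.} For $n=2$ we aim to show $f(xE_{ij})=xE_{ij}f(I)$ and $g=f$, giving $f(X)=Xf(I)$. For $n\neq2$ we use the exponent mismatch in Step 1: Theorem 1.1(ii) forces the relevant entry maps to vanish. We then conclude $f(I)=0$, and hence, via Step 2, $f=g=0$.

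\textbf{Main obstacle.} The hardest part is Step 1's bookkeeping: the constant terms $f(I-E_{ii})$ and $g(I-E_{ii})$ spoil the exact form $f(x)=x^ng(x^{-1})$. Controlling them requires combining several substitutions (diagonal matrices with two nontrivial entries $x,y$, and scalar matrices $xI$, which give directly $f(xI)=x^ng(x^{-1}I)$ entrywise but with noncentral $x$ multiplying). I would first try the scalar matrices $xI$: each entry of $f(xI)$ and $g(xI)$ then satisfies exactly the identity of Theorem 1.1, which pins down $f$ on $D\cdot I$. Afterwards I would use $X_x$ and $I+xE_{ij}$ to spread this to all of $R$.
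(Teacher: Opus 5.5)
\textbf{There is a genuine gap.} Your scalar step (last paragraph) is exactly the paper's Lemma 2.1. Beyond that, none of your substitutions is brought to a form where Theorem 1.1 applies.

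In Step 1, the relation $\varphi(x)+c=x^n\psi(x^{-1})+x^nd$, with $c=\pi_{il}f(I-E_{ii})$ and $d=\pi_{il}g(I-E_{ii})$, has a constant on one side and $x^nd$ on the other. Correcting $\varphi,\psi$ by terms $x\alpha$, $y\beta$ cannot cancel these in general (for $n=2$ it works only if $c=d=0$ already), and you give no other way to remove them.

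In Step 2, $I+xE_{ij}$ yields (for $\operatorname{char}D\neq 2$) exactly $f(I)=g(I)$, $n\,xE_{ij}\,g(xE_{ij})=0$ and $(f+g)(xE_{ij})=n\,xE_{ij}\,g(I)$, and nothing more. This controls $f+g$ on $DE_{ij}$ but says essentially nothing about $f-g$ there. So ``this determines $f$ on all of $R$'' is unfounded, and the permutation-type matrices are only named, never used.

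The theorem also covers $\operatorname{char}D=2$, where your basic device (replace $x$ by $-x$ and add) is vacuous. There $I+xE_{ij}$ is its own inverse, and for even $n$ the identity at it gives only $f(xE_{ij})=g(xE_{ij})$. Step 3 just restates the conclusions.

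The missing idea is to decouple $f$ and $g$ before any matrix computation. Since $X\mapsto X^{-1}$ is an involution of $R^\times$, also $g(X)=X^nf(X^{-1})$. Hence $h_\pm=f\pm g$ satisfy $h(X)=\pm X^nh(X^{-1})$; in characteristic $2$ only the symmetric case is needed, since $f-g=f+g$. Replace $h_+$ by $h_+(X)-Xh_+(I)$: this is still a solution when $n=2$, and equals $h_+$ when $n\neq 2$. Together with Lemma 2.1 for $h_-$, both maps then vanish on $D\cdot I$, so every substitution becomes exact, with no stray constants. The paper then proceeds as follows.
\begin{itemize}
\item Symmetric map on $DE_{ij}$: $I+xE_{ij}$ with $x\mapsto -x$ kills it.
\item Antisymmetric map on $DE_{ij}$: $I+aE_{ij}$ gives $nE_{ij}h_-(a^{-1}E_{ij})=0$. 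A second substitution $aI+aE_{ij}$, with inverse $a^{-1}I-a^{-1}E_{ij}$, then gives $h_-(aE_{ij})=a^nh_-(a^{-1}E_{ij})$.
\item Diagonal $DE_{ii}$: your $X_a=I+(a-1)E_{ii}$ is paired with $aI+(1-a)E_{ii}$. Comparing the two yields $(a^n-1)E_{ii}\,h((a^{-1}-1)E_{ii})=0$, hence $h((a-1)E_{ii})=\pm h((a^{-1}-1)E_{ii})$. This is an $n=0$ instance of Theorem 1.1 once $h(E_{ii})$ cancels (symmetric case) or is shown to be $0$ by taking $a=2$ (antisymmetric case).
\item Characteristic $2$: handled separately, using that $I+aE_{ij}$ is an involution together with the parity of $n$.
\end{itemize}

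One caution if you follow this route. For $n=2$, Theorem 1.1(i) applied to $h(aE_{ij})=a^2h(a^{-1}E_{ij})$ only gives $h(xE_{ij})=x\,h(E_{ij})$. You then still need an extra substitution mixing $E_{ij}$ and $E_{ji}$, such as $xE_{ij}+yE_{ji}+\sum_{k\neq i,j}E_{kk}$, to get $h(E_{ij})=0$. That is where your permutation-type invertibles would genuinely be needed.
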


\section{Theorem 1.2 with $\mathbf{char}\, D = 2$}

This section examines Theorem 1.2 in the setting where the characteristic of $D$ is 2.
Denote by $e_{ij}$  the $m\times m$ matrix whose $(i,j)$-entry is $1$ and all other entries are $0$.
Such matrices $e_{ij}$ (for $1\leq i,j \leq m$) are called the \emph{standard matrix units}.
Let $\pi_{ij} : M_{m}(D) \rightarrow D $ be the $(i,j)$-th coordinate projection.

We begin with the following result.
\begin{lemma}
Let $D$ be an arbitrary noncommutative division ring and  $R = M_{m}(D)$ with $m > 1$.
Suppose that  additive mappings $f$, $g :R \rightarrow R$ satisfy $f(X) = X^{n}g(X^{-1})$ for all  $X\in R^{\times}$, where $n$ is a nonnegative integer. \\
(i) If $n = 2$, then  $f(xI)=g(xI)= xf(I)$ for each $x\in  D$.\\
(ii) If $n \neq2$, then $f(xI)=g(xI) = 0$ for each $x\in  D$.
\end{lemma}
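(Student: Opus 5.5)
Restrict the identity to scalar matrices and reduce entrywise to Theorem 1.1. For $x \in D^{\times}$ the matrix $xI$ is invertible with $(xI)^{-1} = x^{-1}I$ and $(xI)^n = x^nI$. The hypothesis therefore gives
\[
f(xI) = x^n I\, g(x^{-1}I) \qquad \text{for all } x\in D^{\times}.
\]

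For fixed indices $1\le i,j\le m$ define $f_{ij}, g_{ij}: D\to D$ by $f_{ij}(x) = \pi_{ij}(f(xI))$ and $g_{ij}(x)=\pi_{ij}(g(xI))$. These maps are additive, since $x\mapsto xI$, $f$, $g$ and $\pi_{ij}$ are all additive. Left multiplication by the scalar matrix $x^nI$ multiplies every entry on the left by $x^n$, so
\[
f_{ij}(x) = x^n g_{ij}(x^{-1}) \qquad \text{for all } x\in D^{\times}.
\]
Because $D$ is a noncommutative division ring, Theorem 1.1 applies to each pair $(f_{ij},g_{ij})$.

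\emph{Case $n=2$.} Theorem 1.1(i) gives $f_{ij}=g_{ij}$ and $f_{ij}(x) = x f_{ij}(1)$ for all $x \in D$. Since $f_{ij}(1)=\pi_{ij}(f(I))$, reassembling the entries gives $f(xI)=g(xI)$, and the $(i,j)$-entry of $f(xI)$ equals $x\,\pi_{ij}(f(I))$. This is exactly the $(i,j)$-entry of $xf(I)$, where $xf(I)$ means the matrix $f(I)$ with each entry multiplied on the left by $x$, i.e.\ $(xI)f(I)$. Hence $f(xI)=g(xI)=xf(I)$.

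\emph{Case $n\ne 2$.} Theorem 1.1(ii) gives $f_{ij}=g_{ij}=0$ for all $i,j$, so $f(xI)=g(xI)=0$ for every $x \in D$.

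No step here is a real obstacle. The only points to check are that $x^nI$ acts entrywise by left multiplication, and that $x=0$ needs no separate treatment, since Theorem 1.1 already covers all of $D$ through additivity.
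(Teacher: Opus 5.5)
Your proposal is correct and follows essentially the same route as the paper. Both restrict the identity to scalar matrices $xI$, pass to the coordinate maps $f_{kl}(x)=\pi_{kl}(f(xI))$ and $g_{kl}(x)=\pi_{kl}(g(xI))$, which satisfy $f_{kl}(a)=a^n g_{kl}(a^{-1})$, and then apply Theorem 1.1 entrywise before reassembling the matrix.
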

\begin{proof}
Let $a\in D^{\times}$.
By assumption, we have $f(aI) = (aI)^{n}g(a^{-1}I)$.
It follows that $e_{kk}f(aI)e_{ll} =a^{n}e_{kk} g(a^{-1}I)e_{ll}$ for all $1\leq k,l\leq m$.
Then the additive mappings $f_{kl}$, $g_{kl}:D\rightarrow D$ defined by $f_{kl}(x)=\pi_{kl}(f(xI))$ and $g_{kl}(x)=\pi_{kl}(g(xI))$ for each $x\in D$ satisfy  $f_{kl}(a)=a^{n}g_{kl}(a^{-1})$.
By Theorem 1.1, for each $x\in D$, we have  $f_{kl}(x) =g_{kl}(x)= xf_{kl}(1)$ for $n = 2$
and $f_{kl}=g_{kl} = 0$ for $n \neq2$.
It means that $e_{kk}f(xI)e_{ll} =e_{kk}g(xI)e_{ll}= xe_{kk}f(I)e_{ll}$ for $n = 2$ and $e_{kk}f(xI)e_{ll}=e_{kk}g(xI) e_{ll} = 0$ for $n \neq2$.
By the arbitrariness of $e_{kk}$ and $e_{ll}$, we have
\begin{equation}
f(xI) =g(xI)=
\begin{cases}
   xf(I),\quad  & if\quad n=2; \\
  0,  & if\quad n\neq 2. 
\end{cases}
\end{equation}
\end{proof}

In the following, we provide the  characterization of the identity $f(X) = X^{n}f(X^{-1})$.

\begin{lemma}
Let $D$ be a noncommutative division ring of characteristic 2  and let $R = M_{m}(D)$ with $m > 1$.
Suppose that an additive mapping $f :R \rightarrow R$ satisfies $f(X) = X^{n}f(X^{-1})$ for all  $X\in R^{\times}$, where $n$ is a nonnegative integer.\\
(i) If $n = 2$, then  $f(X) = Xf(I)$ for each $X \in  R$.\\
(ii) If $n \neq2$, then $f  = 0$.
\end{lemma}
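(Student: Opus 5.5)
The plan is to reduce everything to Lemma 2.1, which already gives $f(xI)=xf(I)$ when $n=2$ and $f(xI)=0$ when $n\neq 2$, and then to propagate this information to all of $R$. Since $f$ is additive and every matrix is a sum of matrices of the form $a e_{ij}$, it suffices to determine $f(ae_{ij})$ for $a\in D$ and all pairs $i,j$. The aim is to show that $f(ae_{ij})=ae_{ij}f(I)$ when $n=2$, and $f(ae_{ij})=0$ when $n\neq 2$.

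\emph{Off-diagonal units.} Fix $i\neq j$, $x\in D^{\times}$ and $a\in D$, and take $X=xI+ae_{ij}$. Because $e_{ij}^2=0$, $X$ is invertible with
\[
X^{-1}=x^{-1}I-x^{-1}ax^{-1}e_{ij}=x^{-1}I+x^{-1}ax^{-1}e_{ij},
\]
using characteristic $2$. Again because $e_{ij}^2=0$,
\[
X^{n}=x^{n}I+\Big(\sum_{k=0}^{n-1}x^{k}ax^{n-1-k}\Big)e_{ij}.
\]
Substituting into $f(X)=X^nf(X^{-1})$, expanding by additivity, and cancelling the scalar parts via Lemma 2.1 (which gives $f(xI)=x^nf(x^{-1}I)$) leaves a relation between $h(a):=f(ae_{ij})$ and $h(x^{-1}ax^{-1})$. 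The relation has coefficients $x^n$ and $\sum_k x^kax^{n-1-k}$, together with a term involving $f(x^{-1}I)$.

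I will then take $(k,l)$-entries of both sides via $\pi_{kl}$. This gives scalar additive identities on $D$. Specializing first to $x=1$, then to $x$ central, and finally to $x$ with $xa\neq ax$ (possible because $D$ is noncommutative), I will try to force $h(a)=ae_{ij}f(I)$ when $n=2$ and $h=0$ when $n\neq 2$. The parity of $n$ matters here, since $\sum_k x^kax^{n-1-k}$ at $x=1$ equals $na$, which vanishes or not in characteristic $2$. When $n$ is even I will need the genuinely noncommutative choice of $x$.

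\emph{Diagonal units.} For $ae_{ii}$ I plan to avoid a separate computation. Write $e_{ii}$ by conjugating with suitable invertible elementary matrices. A cleaner route is the identity
\[
ae_{ii}=(I+e_{ij})\,ae_{ii}\,(I+e_{ji})-ae_{ij}-ae_{ji}-ae_{jj}-\cdots,
\]
but the relation is not linear in the right way for $f$. Instead I will apply the same substitution trick to $X=xI+ae_{ii}$ for $a\neq -x$, whose inverse is
\[
x^{-1}I+\big((x+a)^{-1}-x^{-1}\big)e_{ii}.
\]
The diagonal block is then a $1\times 1$ identity in $D$, and Theorem 1.1 applies entrywise as in the proof of Lemma 2.1. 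Combining the results with additivity gives $f(X)=Xf(I)$ for $n=2$ and $f=0$ for $n\neq 2$.

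\emph{Main obstacle.} I expect the hardest step to be extracting $h$ from the two-variable off-diagonal identity, because it mixes $h(a)$ with $h(x^{-1}ax^{-1})$ and involves non-commuting sums. The first thing I will try is to replace $a$ by $xax$, then linearize in $x$ by writing $x\mapsto x+1$ (legitimate when $x,x+1\neq 0$), and compare the resulting identities to kill the inhomogeneous terms. This mirrors the way Theorem 1.1 is proved over $D$.
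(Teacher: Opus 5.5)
Your preliminary computations are right: the inverse and $n$-th power of $X=xI+ae_{ij}$, and the resulting relation $h(a)=(x^nI+Se_{ij})\,h(x^{-1}ax^{-1})+Se_{ij}f(x^{-1}I)$ with $S=\sum_k x^kax^{n-1-k}$. But the proposal never extracts $h$ from this relation. Replacing $a$ by $xax$ and linearizing in $x$ amounts to re-proving a two-variable, inhomogeneous analogue of Theorem 1.1, and nothing shows that this closes.

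\textbf{Off-diagonal step.} The missing idea is the specialization $x=a$, i.e.\ $X=aI+ae_{ij}$, for which $x^{-1}ax^{-1}=a^{-1}$ and $S=na^n$. In characteristic $2$ this coefficient vanishes for even $n$. For odd $n$ the extra term $a^ne_{ij}h(a^{-1})$ is zero, because your $x=1$ specialization gives $e_{ij}h(b)=e_{ij}f(I)=0$ (Lemma 2.1(ii)); also $f(a^{-1}I)=0$. What remains is exactly $h(a)=a^nh(a^{-1})$, and Theorem 1.1 applies entrywise as a black box. This is the paper's argument, run for $\tilde f(X)=f(X)-Xf(I)$, which also removes the inhomogeneous terms you meant to kill by linearization.

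For $n=2$, however, Theorem 1.1(i) only yields $h(a)=aP$ with $P=f(e_{ij})$. Here your noncommuting $x$ is genuinely needed. Inserting $h(y)=yP$ into your relation with $xa\neq ax$ gives $(ax+xa)x^{-1}\bigl(P-ax^{-1}e_{ij}P-e_{ij}f(I)\bigr)=0$. So the rows of $P$ other than the $i$-th vanish, hence $e_{ij}P=0$ and $P=e_{ij}f(I)$. The paper's write-up asserts vanishing directly from Theorem 1.1 for every $n$, which for $n=2$ silently needs this step.

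\textbf{Diagonal step.} This part contains an actual error. $X=xI+ae_{ii}$ is not a scalar matrix, so the mechanism of Lemma 2.1 does not carry over. $X^n$ scales row $i$ by $(x+a)^n$ and the other rows by $x^n$. The relation links $f(ae_{ii})$ to $f\bigl(((x+a)^{-1}-x^{-1})e_{ii}\bigr)$ together with $f(xI)$ and $f(x^{-1}I)$. For fixed $x$ this is inhomogeneous, not of the form in Theorem 1.1.

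You need two specific choices, each used on different rows:
\begin{itemize}
\item With $x=1$ (i.e.\ $X=I+(b-1)e_{ii}$), the rows $r\neq i$ give $\pi_{rl}f(be_{ii})=\pi_{rl}f(b^{-1}e_{ii})$.
\item With $x+a=1$ (i.e.\ $X=bI+(1-b)e_{ii}$), row $i$ gives $\psi(b)=\psi(b^{-1})$ for $\psi(b)=\pi_{il}\bigl(f(be_{ii})-be_{ii}f(I)\bigr)$, using $f(bI)=bf(I)$ from Lemma 2.1.
\end{itemize}
Theorem 1.1 is then invoked with exponent $0$, not $n$. The paper combines the same two substitutions for $\tilde f$ to reach $\tilde f(ae_{ii})=\tilde f(a^{-1}e_{ii})$.

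As it stands, both substantive steps of the lemma are left open in your proposal.
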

\begin{proof}
Define the mapping $\tilde{f}:R\rightarrow R$ by $\tilde{f}(X)=f(X)-Xf(I)$ for all $X\in R$. 
 Then $\tilde{f}$ is an additive mapping and satisfies 
\begin{equation}
\tilde{f}(X) = X^{n}\tilde{f}(X^{-1})
\end{equation} for all  $X\in R^{\times}$. 
Indeed, if $n=2$, we have $ \tilde{f}(X) =f(X)-Xf(I)= X^{2}f(X^{-1})-Xf(I)=X^{2}(f(X^{-1})-X^{-1}f(I))=X^{2}\tilde{f}(X^{-1}).$
If $n \neq2$, it follows from Lemma 2.1 that $\tilde{f}=f$.
In particular, we have  
\begin{equation}
\tilde{f}(xI)=f(xI)-xf(I)=0
\end{equation}
  for each $x\in D$.

Let $a\in D^{\times}$ and $1\leq i\neq j\leq m$.

Suppose that $n$ is an odd number.  
Since $(I+ae_{ij})(I+ae_{ij})=I$ and $(I+ae_{ij})^{n}=I+ae_{ij}$, it follows from (2.2) that 
$\tilde{f}(I+ae_{ij})=(I+ae_{ij})^{n}\tilde{f}(I+ae_{ij})=(I+ae_{ij})\tilde{f}(I+ae_{ij}).$
Then we obtain 
\begin{equation}
ae_{ij}\tilde{f}(ae_{ij})=0.
 \end{equation}
 Substituting $a$ with $a^{-1}$ in   (2.4) yields
\begin{equation}
 a^{-1}e_{ij}\tilde{f}(a^{-1}e_{ij})=0.
 \end{equation}
Left-multiplying both sides of  (2.5) by $aI$, we have
\begin{equation}
e_{ij}\tilde{f}(a^{-1}e_{ij})=0.
\end{equation} 
Besides, using $(aI+ae_{ij})(a^{-1}I+a^{-1}e_{ij})=I$, we arrive at
\begin{equation}
\tilde{f}(aI+ae_{ij})=(aI+ae_{ij})^{n}\tilde{f}(a^{-1}I+a^{-1}e_{ij})
=a^{n}(I+e_{ij})\tilde{f}(a^{-1}e_{ij}).
\end{equation}
Combining   (2.3), (2.6) and  (2.7), we conclude that
\begin{equation}
\tilde{f}(ae_{ij})=a^{n}\tilde{f}(a^{-1}e_{ij}).
\end{equation}
  
Suppose that  $n$ is an  even number. 
Since  $(aI+ae_{ij})(a^{-1}I+a^{-1}e_{ij})=I$ and $(aI+ae_{ij})^{n}=a^{n}I$ , we arrive at
\begin{equation}
\tilde{f}(ae_{ij})=\tilde{f}(aI+ae_{ij})=(aI+ae_{ij})^{n}\tilde{f}(a^{-1}I+a^{-1}e_{ij})=a^{n}\tilde{f}(a^{-1}e_{ij}).
\end{equation} 
 
  Thus for each nonnegative integer $n$, we have 
   \begin{equation}
\tilde{f}(ae_{ij})=a^{n}\tilde{f}(a^{-1}e_{ij}).
\end{equation}
Hence  $\pi_{kl}(\tilde{f}(ae_{ij})) =a^{n}\pi_{kl}(\tilde{f}(a^{-1}e_{ij}))$ for all $1\leq k,l\leq m$. 
By Theorem 1.1,  we have $\pi_{kl}(\tilde{f}(xe_{ij}))=0$ for each $x\in D$,  which means that 
   \begin{equation}
\tilde{f}(xe_{ij})=0
\end{equation}
for each $x\in D$.

Next we claim $\tilde{f}(xe_{ii})=0$ for each $x\in D$.
Note that  $(I+(a-1)e_{ii})(I+(a^{-1}-1)e_{ii})=I$ and $(I+(a-1)e_{ii})^{n}=I+(a^{n}-1)e_{ii}$.
By (2.2) and (2.3), we have 
\begin{align}
\tilde{f}((a-1)e_{ii})&=\tilde{f}(I+(a-1)e_{ii})\nonumber\\
&=(I+(a-1)e_{ii})^{n}\tilde{f}(I+(a^{-1}-1)e_{ii})\nonumber\\
&=(I+(a^{n}-1)e_{ii})\tilde{f}((a^{-1}-1)e_{ii}).
\end{align}
Besides, using $(aI+(1-a)e_{ii})(a^{-1}I+(1-a^{-1})e_{ii})=I$ and $(aI+(1-a)e_{ii})^{n}=a^{n}I+(1- a^{n})e_{ii}$,
we have
\begin{align}
\tilde{f}((1-a)e_{ii})&=\tilde{f}(aI+(1-a)e_{ii})\nonumber\\
&=(aI+(1-a)e_{ii})^{n}\tilde{f}(a^{-1}I+(1-a^{-1})e_{ii})\nonumber\\
&=(a^{n}I+(1- a^{n})e_{ii})\tilde{f}((1-a^{-1})e_{ii}).
\end{align}
Adding  (2.12)  and (2.13),  we have 
\begin{equation}
(a^{n}-1)\tilde{f}((a^{-1}-1)e_{ii})=0.
\end{equation}
Thus, left-multiplying both sides of  (2.14) by $e_{ii}$, we get 
\begin{equation}
(a^{n}-1)e_{ii}\tilde{f}((a^{-1}-1)e_{ii})=0.
\end{equation}
Combining  (2.12)  and (2.15), we have  $\tilde{f}((a-1)e_{ii})=\tilde{f}((a^{-1}-1)e_{ii})$, which is equivalent to $\pi_{kl}(\tilde{f}(ae_{ii}))=\pi_{kl}(\tilde{f}(a^{-1}e_{ii}))$ for all $1\leq k,l\leq m$. 
By Theorem 1.1, we have  $\pi_{kl}(\tilde{f}(xe_{ii}))=0$ for all $x\in D$ and  $1\leq k,l\leq m$.
Hence 
\begin{equation}
\tilde{f}(xe_{ii})=0.
\end{equation}

Therefore, in view of  (2.11)  and (2.16), we have $\tilde{f}=0$.
By the definition of $\tilde{f}$,
\begin{equation*}
f(X) =
\begin{cases}
   Xf(I),\quad  & if\quad n=2, \\
  0,  & if\quad n\neq 2. 
\end{cases}
\end{equation*} for each $X\in R$.
\end{proof}

We now turn to the proof of Theorem 1.2 for the case of characteristic~2, employing the same method as in \cite[Lemma 2.1]{ELL}.

\begin{thm}
Let $D$ be a noncommutative division ring of characteristic 2 and let $R = M_{m}(D)$ with $m > 1$.
Suppose that additive mappings $f,$ $g :R \rightarrow R$ satisfy $f(X) = X^{n}g(X^{-1})$ for all  $X\in R^{\times}$, where $n$ is a nonnegative integer.\\
(i) If $n = 2$, then $f = g$ and $f(X) = Xf(I)$ for each $X \in  R$.\\
(ii) If $n \neq2$, then $f =g = 0$.
\end{thm}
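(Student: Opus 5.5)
The plan is to reduce to Lemma 2.2 by symmetrizing, following \cite[Lemma 2.1]{ELL}. From $f(X)=X^{n}g(X^{-1})$ for all $X\in R^{\times}$, replacing $X$ by $X^{-1}$ gives $f(X^{-1})=X^{-n}g(X)$, that is, $g(X)=X^{n}f(X^{-1})$. So the pair $(f,g)$ satisfies the same identity with the roles of $f$ and $g$ exchanged.

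Put $h=f+g$. It is additive, and
\[
h(X)=X^{n}g(X^{-1})+X^{n}f(X^{-1})=X^{n}h(X^{-1})
\]
for all $X\in R^{\times}$. Lemma 2.2 applies because $\operatorname{char} D=2$. It gives $h(X)=Xh(I)$ when $n=2$ and $h=0$ when $n\neq 2$.

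It remains to separate $f$ from $g$, and I expect this to be the main difficulty: in characteristic $2$ one cannot recover $f$ from $f+g$ and $f-g$. I would instead get a second relation by using the identity for $X$ and for $X+I$, or by linearizing on elements such as $X=I+ae_{ij}$. A cleaner route is to show directly that $f=g$, i.e.\ $h=0$ when $n\neq 2$ and $h(X)=Xh(I)$ when $n=2$, and then determine $f$ itself.

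\textbf{The case $n\neq 2$.} Here $h=0$, so $g=-f=f$ (characteristic $2$). Then $f(X)=X^{n}f(X^{-1})$ holds for every $X\in R^{\times}$. Lemma 2.2 applied to $f$ alone gives $f=0$, hence $g=0$.

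\textbf{The case $n=2$.} We know $f+g=h$ with $h(X)=Xh(I)$, and Lemma 2.1 gives $h(I)=f(I)+g(I)=2f(I)=0$. Therefore $h=0$ and $g=f$. Then $f$ satisfies $f(X)=X^{2}f(X^{-1})$, and Lemma 2.2 gives $f(X)=Xf(I)$ for every $X\in R$, with $g=f$. This proves both parts.

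The one delicate point is using Lemma 2.1 to kill $h(I)$ in the case $n=2$. Since Lemma 2.1 gives $f(I)=g(I)$, we get $h(I)=2f(I)=0$ in characteristic $2$, so no further separation argument is needed.
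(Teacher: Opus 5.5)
Your proof is correct and is essentially the paper's argument. You symmetrize so that $f+g$ satisfies the same identity, apply Lemma 2.2, use characteristic $2$ together with $f(I)=g(I)$ (which also follows directly from taking $X=I$) to get $f=g$, and then apply Lemma 2.2 once more to $f$. The exploratory paragraph about needing a second relation to separate $f$ from $g$ is unnecessary and can be dropped.
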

\begin{proof}
It is obvious that $g(X) = X^{n}f(X^{-1})$ for all $X\in R^{\times}$. Thus
$$(f + g)(X) = X^{n}(f + g)(X^{-1})$$
for all $X\in R^{\times}$. 

If $n = 2$, it follows from Lemma 2.2 that $(f + g)(X) = X(f + g)(I)$ for all $X\in R$. 
Note that $f(I)=g(I)$. Then  $(f + g)=2 Xf (I)=0$. Hence $f=g$. Using  Lemma 2.2 again, we obtain  $f(X) = Xf(I)$ for all $X \in  R$.

If $n \neq 2$,  it follows from Lemma 2.2 that  $f+g=0$. Then $f=g$. Hence $f=0$.
\end{proof}

\section{Theorem 1.2 with $\mathbf{char}\, D \neq 2$}

This section is dedicated to proving Theorem 1.2 for a noncommutative division ring of characteristic not 2.
We begin with the special case where $f = g$ in the identity $f(X) = X^{n}g(X^{-1})$.

\begin{lemma}
Let $D$ be a noncommutative division ring of characteristic not 2 and let $R = M_{m}(D)$ with $m > 1$.
Suppose that an additive mapping $f :R \rightarrow R$ satisfies $f(X) = X^{n}f(X^{-1})$ for all  $X\in R^{\times}$, where $n$ is a nonnegative integer. Then $f(X)=Xf(I)$ for all $X\in R$. In particular, $f=0$  for $n\neq2$.
\end{lemma}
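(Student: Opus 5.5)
The plan is to follow the scheme of Lemma 2.2, replacing the characteristic-2 tricks by cancellation of the factor $2$. Define $\tilde f(X)=f(X)-Xf(I)$. Exactly as in the proof of Lemma 2.2, $\tilde f$ is additive and satisfies $\tilde f(X)=X^n\tilde f(X^{-1})$ for all $X\in R^\times$. For $n=2$ this is the direct computation given there. For $n\neq 2$, Lemma 2.1 gives $f(I)=0$, hence $\tilde f=f$. Moreover, Lemma 2.1 gives $\tilde f(xI)=0$ for all $x\in D$. Since the matrix units $xe_{ij}$ and $xe_{ii}$ additively span $R$, it suffices to show that $\tilde f$ kills them. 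Then $f(X)=Xf(I)$, and for $n\ne 2$ also $f(I)=0$, so $f=0$.

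\emph{Off-diagonal entries.} Let $i\neq j$ and $a\in D^\times$. We have $(I+ae_{ij})^{-1}=I-ae_{ij}$ and $(I+ae_{ij})^n=I+nae_{ij}$. Using $\tilde f(I)=0$ and additivity, the identity becomes
\[
\tilde f(ae_{ij})=-(I+nae_{ij})\tilde f(ae_{ij}),\quad\text{that is,}\quad 2\tilde f(ae_{ij})=-nae_{ij}\tilde f(ae_{ij}).
\]
Left-multiplying by $ae_{ij}$ and using $(ae_{ij})^2=0$ gives $2ae_{ij}\tilde f(ae_{ij})=0$. Since $\operatorname{char}D\neq 2$, this means $ae_{ij}\tilde f(ae_{ij})=0$. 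Substituting back, $2\tilde f(ae_{ij})=0$, so $\tilde f(ae_{ij})=0$. This step needs no appeal to Theorem 1.1.

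\emph{Diagonal entries.} This is the step I expect to need the most care. Take $X=I+(a-1)e_{ii}$, whose inverse is $I+(a^{-1}-1)e_{ii}$ and whose $n$-th power is $I+(a^n-1)e_{ii}$. Also take $Y=aI+(1-a)e_{ii}$, whose inverse is $a^{-1}I+(1-a^{-1})e_{ii}$ and whose $n$-th power is $a^nI+(1-a^n)e_{ii}$. Using $\tilde f(xI)=0$ and additivity, we obtain, as in (2.12)--(2.13),
\[
\tilde f((a-1)e_{ii})=(I+(a^n-1)e_{ii})\,\tilde f((a^{-1}-1)e_{ii})
\]
and the analogous identity for $Y$. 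Adding them, the left side is $0$, and the right side is $(a^n-1)(2e_{ii}-I)\tilde f((a^{-1}-1)e_{ii})$. The matrix $2e_{ii}-I$ is invertible (a diagonal matrix with entries $\pm1$) and commutes with the scalar matrix $(a^n-1)I$. Hence $(a^n-1)\tilde f((a^{-1}-1)e_{ii})=0$, and left-multiplying by $e_{ii}$ gives $(a^n-1)e_{ii}\tilde f((a^{-1}-1)e_{ii})=0$. Plugging this into the first identity yields $\tilde f((a-1)e_{ii})=\tilde f((a^{-1}-1)e_{ii})$, which by additivity reduces to $\tilde f(ae_{ii})=\tilde f(a^{-1}e_{ii})$ for all $a\in D^\times$.

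For each $k,l$, the additive map $\varphi(x)=\pi_{kl}(\tilde f(xe_{ii}))$ on $D$ then satisfies $\varphi(a)=a^0\varphi(a^{-1})$. Theorem 1.1(ii) with exponent $0$ gives $\varphi=0$, so $\tilde f(xe_{ii})=0$ for all $x$. Combining both cases, $\tilde f=0$, which completes the argument.
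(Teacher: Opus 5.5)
Your proof is correct and follows the paper's argument essentially step for step. It uses the same auxiliary map $\tilde f$, the same test matrices $I\pm ae_{ij}$, $I+(a-1)e_{ii}$, $aI+(1-a)e_{ii}$, and the same final appeal to Theorem 1.1 with exponent $0$; the only differences are cosmetic (you kill $ae_{ij}\tilde f(ae_{ij})$ using $(ae_{ij})^2=0$ where the paper substitutes $x\mapsto -x$, and you invert $2e_{ii}-I$ before left-multiplying by $e_{ii}$, whereas the paper left-multiplies by $e_{ii}$ directly).
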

\begin{proof}
Let $x\in D$  and $1\leq i\neq j\leq m$.

In view of Lemma 2.1, $f(xI)=xf(I)$ for $n=2$ and   $f(xI)=0$ for $n\neq2$.
Define the mapping $\tilde{f}:R\rightarrow R$ by $\tilde{f}(X)=f(X)-Xf(I)$ for all $X\in R$. Then $\tilde{f}$ is an additive mapping and satisfies 
\begin{equation}
\tilde{f}(X) = X^{n}\tilde{f}(X^{-1})
\end{equation} for all  $X\in R^{\times}$. 
We obtain
\begin{equation}
\tilde{f}(xI)=f(xI)-xf(I)=0.
\end{equation}

Since $(I+xe_{ij})(I-xe_{ij})=I$ and $(I+xe_{ij})^{n}=(I+nxe_{ij})$, it follows from (3.1) that 
\begin{eqnarray*}
\tilde{f}(xe_{ij})= \tilde{f}(I+xe_{ij})= (I+xe_{ij})^{n}\tilde{f}(I-xe_{ij})=-(I+nxe_{ij})\tilde{f}(xe_{ij}),
\end{eqnarray*}
which means that
 \begin{equation}
2\tilde{f}(xe_{ij})+nxe_{ij}\tilde{f}(xe_{ij})=0.
\end{equation}
Replacing $x$ with $-x$ in  (3.3) and comparing with  (3.3), we have 
 \begin{equation}
\tilde{f}(xe_{ij})=0
\end{equation}
for all  $x\in D$.

Let $a\in D^{\times}$.
Replacing the invertible element $X$ in (3.1) with $I+(a-1)e_{ii}$ and $aI+(1-a)e_{ii}$ in turn, we obtain
\begin{align*}
&\tilde{f}(I+(a-1)e_{ii})=(I+(a-1)e_{ii})^{n}\tilde{f}(I+(a^{-1}-1)e_{ii}),\\
&\tilde{f}(aI+(1-a)e_{ii})=(aI+(1-a)e_{ii})^{n}\tilde{f}(a^{-1}I+(1-a^{-1})e_{ii}).
\end{align*}
Applying  (3.2), these expressions simplify to
\begin{align}
&\tilde{f}((a-1)e_{ii})=(I+(a^{n}-1)e_{ii})\tilde{f}((a^{-1}-1)e_{ii}),\\
&\tilde{f}((a-1)e_{ii})=(a^{n}I+(1-a^{n})e_{ii})\tilde{f}((a^{-1}-1)e_{ii}).
\end{align}
Comparing  (3.5) and  (3.6), we have 
\begin{equation}
0=[(1-a^{n})I+2(a^{n}-1)e_{ii}]\tilde{f}((a^{-1}-1)e_{ii})
\end{equation}
Left-multiplying both sides of  (3.7) by $e_{ii}$, we have 
\begin{equation}
0=(a^{n}-1)e_{ii}\tilde{f}((a^{-1}-1)e_{ii})
\end{equation}
By (3.5) and (3.8), we obtain
\begin{align}
\tilde{f}((a-1)e_{ii})=\tilde{f}((a^{-1}-1)e_{ii}),
\end{align}
which means that 
$$\tilde{f}(ae_{ii})=\tilde{f}(a^{-1}e_{ii}),$$
which is equivalent to $\pi_{kl}(\tilde{f}(ae_{ii}))=\pi_{kl}(\tilde{f}(a^{-1}e_{ii}))$ for all $1\leq k,l\leq m$. 
It follows from  Theorem 1.1 that $\pi_{kl}(\tilde{f}(xe_{ii}))=0$. Hence, we have
\begin{align}
\tilde{f}(xe_{ii})=0
\end{align} 
for each  $x\in D$.

It follows from   (3.4) and  (3.10) that $\tilde{f}(X)=0$ for each $X\in R$. 
Hence $f(X)=Xf(I)$ for all $X\in R$. In particular, $f=0$  for $n\neq2$.
\end{proof}

The following conclusion plays a key role in the proof of the main theorem.

\begin{lemma}
Let $D$ be a noncommutative division ring of characteristic not 2 and let  $R = M_{m}(D)$ with $m > 1$.
Suppose that an additive mapping $f :R \rightarrow R$ satisfies $f(X) + X^{n}f(X^{-1})=0$ for all  $X\in R^{\times}$, where $n$ is a nonnegative integer. Then $f=0$.
\end{lemma}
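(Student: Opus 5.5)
The plan is to split according to the parity of $n$. For odd $n$ the identity alone forces $2f=0$. For even $n$, I would use the fact that every involution of $R$ is killed by $f$, and then handle the diagonal entries with the same pair of substitutions used in Lemma 3.1.

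\emph{Odd $n$.} Replacing $X$ by $-X$ and using additivity ($f(-Y)=-f(Y)$) gives $f(X)=(-1)^{n}X^{n}f(X^{-1})$. For $n$ odd this reads $f(X)=X^{n}f(X^{-1})$. Together with the hypothesis $f(X)=-X^{n}f(X^{-1})$ this yields $2f(X)=0$ for all $X\in R^{\times}$. Since $\operatorname{char}D\neq 2$, $f$ vanishes on $R^{\times}$. Every element of $R$ is a sum of invertibles, e.g. $Y=(Y-tI)+tI$ with $t$ a nonzero integer chosen so that both summands are invertible, or one uses $e_{ij}=(I+e_{ij})-I$ together with diagonal decompositions. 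Hence $f=0$.

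\emph{Even $n$.} Take $X=I$ to get $f(I)=-f(I)$, so $f(I)=0$. More generally, for any involution $U$ ($U^{2}=I$, so $U^{-1}=U$ and $U^{n}=I$) we get $f(U)=-f(U)$, hence $f(U)=0$.
\begin{itemize}
\item With $U=I-2e_{jj}$ this gives $f(e_{jj})=0$ for each $j$.
\item For $i\neq j$ and $x\in D$, the matrix $U=I-2e_{jj}+xe_{ij}$ is an involution, since its $\{i,j\}$ block $\begin{pmatrix}1&x\\0&-1\end{pmatrix}$ squares to the identity. Hence $0=f(U)=f(I)-2f(e_{jj})+f(xe_{ij})$, so $f(xe_{ij})=0$.
\end{itemize}

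It remains to show $f(xe_{ii})=0$ for $x\in D$. Fix $a\in D^{\times}$ and substitute $X=I+(a-1)e_{ii}$ and $X=aI+(1-a)e_{ii}$, exactly as in Lemma 3.1. First I need $f(xI)=0$ for all $x\in D$. Lemma 2.1 applied with $g=-f$ does this when $n\neq 2$. When $n=2$, Lemma 2.1 gives $f(xI)=g(xI)=-f(xI)$, so again $f(xI)=0$. The two substitutions then give
\begin{align*}
f((a-1)e_{ii})&=-(I+(a^{n}-1)e_{ii})\,f((a^{-1}-1)e_{ii}),\\
f((a-1)e_{ii})&=-(a^{n}I+(1-a^{n})e_{ii})\,f((a^{-1}-1)e_{ii}).
\end{align*}
Subtracting and left-multiplying by $e_{ii}$ gives $(a^{n}-1)e_{ii}f((a^{-1}-1)e_{ii})=0$. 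Feeding this into the first line yields $f((a-1)e_{ii})=-f((a^{-1}-1)e_{ii})$. Since $f(e_{ii})=0$, this becomes $f(ae_{ii})=-f(a^{-1}e_{ii})$.

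Now apply $\pi_{kl}$ for each $k,l$. The additive maps $\varphi(x)=\pi_{kl}(f(xe_{ii}))$ and $\psi=-\varphi$ satisfy $\varphi(a)=a^{0}\psi(a^{-1})$ for all $a\in D^{\times}$. By Theorem 1.1(ii) with exponent $0\neq 2$, $\varphi=0$. Hence $f(xe_{ii})=0$. Combined with the off-diagonal vanishing and additivity, this gives $f=0$.

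The main obstacle I anticipate is the off-diagonal part. The direct substitution $X=I+xe_{ij}$ only yields $nxe_{ij}f(xe_{ij})=0$, which is weak, especially when $\operatorname{char}D$ divides $n$. The involution trick is what I would rely on to bypass this, so the key check is that $I-2e_{jj}+xe_{ij}$ really is an involution for every $x$, which holds by the block computation above. The diagonal step, in contrast, is a routine adaptation of Lemma 3.1 with a sign change, closed by Theorem 1.1 at exponent $0$.
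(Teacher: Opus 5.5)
Your proof is correct, and it takes a partly different route from the paper.

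\textbf{How the two arguments differ.} The paper treats all $n$ uniformly. For $i\neq j$ it substitutes $I+ae_{ij}$, which gives $nae_{ij}f(ae_{ij})=0$. It then substitutes $aI+ae_{ij}$ to get $f(ae_{ij})=a^{n}f(a^{-1}e_{ij})$, and finishes with Theorem 1.1 at exponent $n$. On the diagonal it uses exactly your two substitutions and obtains $f(ae_{ii})+f(a^{-1}e_{ii})=2f(e_{ii})$. It kills $f(e_{ii})$ by plugging in $a=2$, then applies Theorem 1.1 at exponent $0$.

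You instead dispose of odd $n$ at once, using $X\mapsto -X$ together with the fact that units additively span $R$. For even $n$ you get $f(e_{jj})=0$ and $f(xe_{ij})=0$ directly from the involutions $I-2e_{jj}$ and $I-2e_{jj}+xe_{ij}$, with no appeal to Theorem 1.1. Only the diagonal step is shared with the paper.

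\textbf{What each route buys.} Your route gains something concrete at $n=2$. There, Theorem 1.1(i) applied to $a\mapsto\pi_{kl}(f(ae_{ij}))$ yields only $f(xe_{ij})=xf(e_{ij})$, not the vanishing the paper asserts. Your involution argument gives $f(xe_{ij})=0$ for every even $n$, including $n=2$. The paper's step can be repaired with exactly your involution $I-2e_{jj}+e_{ij}$, once $f(e_{jj})=0$ is known from the diagonal step. The paper's route, for its part, needs no parity split and no spanning argument.

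\textbf{Two small slips.}
\begin{enumerate}
\item Substituting $-X$ gives $f(-X)=(-1)^{n}X^{n}f(X^{-1})$, that is, $f(X)=(-1)^{n+1}X^{n}f(X^{-1})$. The formula you display, $f(X)=(-1)^{n}X^{n}f(X^{-1})$, would for odd $n$ merely restate the hypothesis. The conclusion $f(X)=X^{n}f(X^{-1})$ that you then draw for odd $n$ is nonetheless correct.
\item The decomposition $Y=(Y-tI)+tI$ with $t$ a nonzero integer can fail in small characteristic. For example, take $\operatorname{char}D=3$, $m=2$ and $Y=\operatorname{diag}(1,-1)$: both $Y-I$ and $Y+I$ are singular. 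So rely on your second decomposition instead. For $i\neq j$ use $xe_{ij}=(I+xe_{ij})-I$. For the diagonal, pick $u\in D\setminus\{0,x\}$ and write $xe_{ii}=(I+(u-1)e_{ii})+(-I+(x-u+1)e_{ii})$, a sum of two invertible diagonal matrices.
\end{enumerate}
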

\begin{proof}
Note that $f(I)=0$. By Lemma 2.1, we have $f(xI)=0$ for all  $x\in D$. 

Let $a\in D^{\times}$ and $1\leq i\neq j\leq m$.

By assumption, using the invertible element $I+ae_{ij}$, we have 
\begin{equation}
 f(I+ae_{ij})+(I+ae_{ij})^{n}f(I-ae_{ij})=0,
\end{equation}
which implies $nae_{ij}f(ae_{ij})=0.$ It follows that
\begin{equation}
ne_{ij}f(a^{-1}e_{ij})=0.
\end{equation}
Since $(aI+ae_{ij})(a^{-1}I-a^{-1}e_{ij})=I$, we have 
\begin{align}
0&=f(aI+ae_{ij})+(aI+ae_{ij})^{n}f(a^{-1}I-a^{-1}e_{ij}) \nonumber  \\
  &=f(ae_{ij})-a^{n}(I+ne_{ij})f(a^{-1}e_{ij}) \nonumber \\
  &=f(ae_{ij})-a^{n}f(a^{-1}e_{ij}),
\end{align}
which  means that $$\pi_{kl}(f(ae_{ij}))=a^{n}\pi_{kl}(f(a^{-1}e_{ij}))$$ for all $a\in D^{\times}$ and $1\leq k,l\leq m$.
By Theorem 1.1, we have 
\begin{equation}
f(xe_{ij})=0
\end{equation}
for each $x\in D$.

Using invertible elements $I+(a-1)e_{ii}$, $aI+(1-a)e_{ii}$, we have
\begin{align*}
&{f}(I+(a-1)e_{ii})+(I+(a-1)e_{ii})^{n}{f}(I+(a^{-1}-1)e_{ii})=0,\\
&{f}(aI+(1-a)e_{ii})+(aI+(1-a)e_{ii})^{n}{f}(a^{-1}I+(1-a^{-1})e_{ii})=0.
\end{align*}
Since $f(xI)=0$ for each $x\in D$, we have 
\begin{align}
&{f}((a-1)e_{ii})+(I+(a^{n}-1)e_{ii}){f}((a^{-1}-1)e_{ii})=0,\\
&{f}((a-1)e_{ii})+(a^{n}I+(1-a^{n})e_{ii}){f}((a^{-1}-1)e_{ii})=0.
\end{align}
Comparing   (3.15) and  (3.16), we have 
\begin{equation}
[(1-a^{n})I+2(a^{n}-1)e_{ii}]{f}((a^{-1}-1)e_{ii})=0.
\end{equation}
Left-multiplying both sides of  (3.17) by $e_{ii}$, we have 
\begin{equation}
(a^{n}-1)e_{ii}{f}((a^{-1}-1)e_{ii})=0
\end{equation}
Applying  (3.18) to  (3.15), we obtain
\begin{align}
{f}((a-1)e_{ii})+{f}((a^{-1}-1)e_{ii})=0,
\end{align}
which means that 
$${f}(ae_{ii})+{f}(a^{-1}e_{ii})=2{f}(e_{ii}).$$
Taking $ae_{ii}=2e_{ii}$, we have ${f}(2e_{ii})+\frac{1}{2}{f}(e_{ii})=2{f}(e_{ii})$.
So ${f}(e_{ii})=0$. It follows that ${f}(ae_{ii})+{f}(a^{-1}e_{ii})=0$
for all $a\in D^{\times}$,
which  means that $$\pi_{kl}({f}(ae_{ii}))=-\pi_{kl}({f}(a^{-1}e_{ii}))$$ for all $1\leq k,l\leq m$.
In view of  Theorem 1.1, we have
\begin{align}
f(xe_{ii})=0
\end{align} 
for each  $x\in D$.

It follows from  (3.14) and  (3.20) that  $f=0$.
\end{proof}

Combining  Lemmas 3.1 and 3.2, we have the following result.
\begin{thm}
Let $D$ be a noncommutative division ring of characteristic not 2 and let $R = M_{m}(D)$ with $m > 1$.
Suppose that additive mappings $f,$ $g :R \rightarrow R$ satisfy $f(X) = X^{n}g(X^{-1})$ for all  $X\in R^{\times}$, where $n$ is a nonnegative integer.\\
(i) If $n = 2$, then $f = g$ and $f(X) = Xf(I)$ for each $X \in  R$.\\
(ii) If $n \neq2$, then $f =g = 0$.
\end{thm}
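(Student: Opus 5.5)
The plan is to decompose the pair $(f,g)$ into a symmetric and an antisymmetric part, mirroring the proof of Theorem 2.3, and to treat each part with Lemma 3.1 or Lemma 3.2. First, from $f(X)=X^{n}g(X^{-1})$ for all $X\in R^{\times}$, substitute $X^{-1}$ for $X$ to get $f(X^{-1})=X^{-n}g(X)$. Left-multiplying by $X^{n}$ gives
\[
g(X)=X^{n}f(X^{-1}) \qquad \text{for all } X\in R^{\times}.
\]
Adding and subtracting the two identities then gives
\[
(f+g)(X)=X^{n}(f+g)(X^{-1}),\qquad (f-g)(X)+X^{n}(f-g)(X^{-1})=0
\]
for all $X\in R^{\times}$. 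Both $f+g$ and $f-g$ are additive.

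Next, apply Lemma 3.2 to the additive map $f-g$. This yields $f-g=0$, so $f=g$. In particular, $f$ satisfies $f(X)=X^{n}f(X^{-1})$ for all $X\in R^{\times}$. Lemma 3.1 then gives $f(X)=Xf(I)$ for all $X\in R$, and moreover $f=0$ when $n\neq 2$. Since $g=f$, this proves both (i) and (ii). We could equally apply Lemma 3.1 to $f+g$ and divide by $2$, which is legitimate because $\operatorname{char}D\neq 2$, but this is not needed once $f=g$ is known.

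The only step requiring care is the derivation of $g(X)=X^{n}f(X^{-1})$. This uses only that $X\mapsto X^{-1}$ is a bijection of $R^{\times}$ and that $X^{n}$ is invertible, so it presents no real obstacle. All of the substantive work is already contained in Lemmas 3.1 and 3.2, so the theorem follows by this combination.
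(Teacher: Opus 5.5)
Your proposal is correct and is essentially the paper's proof: you derive $g(X)=X^{n}f(X^{-1})$, note that $f+g$ satisfies the symmetric identity and $f-g$ the antisymmetric one, and then use Lemma 3.2 to get $f=g$ and Lemma 3.1 to get the form of $f$. The only difference is cosmetic: you apply Lemma 3.1 to $f$ itself once $f=g$ is known, whereas the paper applies it to $f+g$ and then implicitly divides by $2$.
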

\begin{proof}
  By assumption, we have $g(X) = X^{n}f(X^{-1})$ for all $X\in R^{\times}$.
It follows that 
$$(f + g)(X) = X^{n}(f + g)(X^{-1}),$$
$$(f -g)(X) = -X^{n}(f - g)(X^{-1})$$
for all $X\in R^{\times}$. 
It follows from Lemma  3.1 that $$(f+g)(X) = X(f+g)(I)$$ for all $X \in  R$ and $f+g=0$ for $n\neq2$.
In view of Lemma  3.2, we have $$f-g=0.$$
Therefore, we conclude that  $f(X) = g(X)=Xf(I)$ for all $X \in  R$. 
In particular, when  $n \neq2$, we have  $f =g = 0$.
\end{proof}

\begin{rem} Theorem 1.2 generalizes Theorem 1.1, with the latter corresponding to the special case 
$m=1$.
\end{rem}

We complete the  whole proof of Theorem 1.2.

\end{document}